\newtheorem{thm}{Theorem}[section]
\makeatletter \@addtoreset{equation}{section} \makeatother
\begin{document}

\begin{center}
{\Large\bf The Real-Rootedness and Log-concavities of \\[6pt]
Coordinator Polynomials of Weyl Group Lattices}
\end{center}

\begin{center}
David G. L. Wang$^{1}$ and Tongyuan Zhao$^{2}$\\[6pt]

$^{1}$Beijing International Center for Mathematical Research\\
$^{2}$School of Mathematics, LMAM\\
$^{1,2}$Peking University, Beijing 100871, P.R. China

{\tt $^{1}$wgl@math.pku.edu.cn}$\quad $ {\tt
$^{2}$zhaotongyuan@pku.edu.cn}
\end{center}

\begin{abstract}
It is well-known that the coordinator polynomials of the classical root lattice of 
type~$A_n$ and those of type~$C_n$ are real-rooted. 
They can be obtained, either by the Aissen-Schoenberg-Whitney theorem, 
or from their recurrence relations.  
In this paper, we develop a trigonometric substitution approach 
which can be used to establish the real-rootedness 
of coordinator polynomials of type~$D_n$. 
We also find the coordinator polynomials of type $B_n$ 
are not real-rooted in general.
As a conclusion, 
we obtain that all coordinator polynomials of Weyl group lattices are log-concave.
\end{abstract}

\noindent\textbf{Keywords:} 
coordinator polynomial; 
log-concavity;
real-rootedness; 
trigonometric substitution; 
Weyl group lattice

\noindent\textbf{AMS Classification:} 65H04

\section{Introduction}

Let $f(x)=\sum_{i=1}^na_ix^i$ be a polynomial of degree $n$ with
nonnegative coefficients. We say that $f(x)$ is {\em real-rooted} if all
its zeros are real. 
Real-rooted polynomials have attracted much attention during the past decades.
One of the most significant reasons is that for any polynomial,
the real-rootedness implies the log-concavity of its coefficients,
which in turn implies the unimodality of the coefficients. 
Indeed, unimodal and log-concave sequences occur naturally 
in combinatorics, algebra, analysis, geometry,
computer science, probability and statistics. We refer the reader to the survey papers,
Brenti~\cite{Bre94} and Stanley~\cite{Sta89},
for various results on the unimodality and log-concavity.

There is a characterization of real-rooted
polynomials in the theory of total positivity; see
Karlin~\cite{Kar68}. 
A matrix $(a_{ij})_{i,j\ge0}$ is said to be {\em totally positive} if all its minors
have nonnegative determinants.
The sequence $\{a_k\}_{k=0}^n$ is called a
{\em P\'olya frequency sequence} if the lower triangular
matrix $(a_{i-j})_{i,j=0}^n$ is totally positive, where
$a_k$ is set to be zero if $k<0$. A basic link between P\'olya frequency sequences
and real-rooted polynomial was given by the Aissen-Schoenberg-Whitney
theorem~\cite{ASW52}, which stated that the polynomial $f(x)$ is real-rooted if and only
if the sequence $\{a_k\}_{k=0}^n$ is a P\'olya frequency
sequence. Another characterization from the probabilistic point of
view can be found in Pitman~\cite{Pit97}, see also Schoenberg~\cite{Sch55}.

Polynomials arising from combinatorics are often real-rooted. Basic
examples include the generating functions of binomial coefficients,
of Stirling numbers of the first kind and of the second kind, of
Eulerian numbers, and the matching polynomials; see, for example, Brenti~\cite{Bre95,Bre96},
Liu and Wang~\cite{LW07},
Stanley~\cite{Sta00} and Wang and Yeh~\cite{WY05}.

This paper is concerned with the real-rootedness and the log-concavities 
of coordinator polynomials of Weyl group lattices. 

Following Ardila et al.~\cite{ABHPS11}, 
we give an overview of the notions. 
Let $\mathcal{L}$ be a lattice, that is,
a discrete subgroup of a finite-dimensional Euclidean vector space
$E$. The dimension of the subspace spanned by $\mathcal{L}$ is called its
rank. A lattice is said to be generated as a monoid if there exists
a finite collection~$M$ of vectors such that every vector in the
lattice is a nonnegative integer linear combination
of the vectors in~$M$. 
Suppose that $\mathcal{L}$ is a lattice of rank~$d$, generated by~$M$. 
For any vector $v$ in~$\mathcal{L}$, define the length of~$v$ with respect to~$M$ 
to be the minimum sum of the
coefficients among all nonnegative integer linear combinations, denoted by~$\ell(v)$. In
other words,
\[
\ell(v)=\min\biggl\{\ \sum_{m\in M}c_m\ \, \bigg|\ \,v=\sum_{m\in M}
c_mm,\ c_m\geq 0\ \biggr\}.
\]
Let $S(k)$ be the number of vectors of length $k$ in $\mathcal{L}$.
Benson~\cite{Ben83} proved that the generating function
\begin{equation}\label{def-h}
\sum_{k\ge0}S(k)x^{k}={h(x)\over(1-x)^d}
\end{equation}
is rational, where $h(x)$ is a polynomial of degree at most $d$. 
Following Conway and Sloane~\cite{CS97},
we call $h(x)$ the {\em coordinator polynomial} with respect
to~$M$.

We concern ourselves with
the classical root lattices as $\mathcal{L}$. Let
$e_i$ denote the vector in $E$, having the $i$th entry one, and all
other entries zero, where the space $E$ is taken to
be~$\mathbb{R}^{n+1}$ for the root lattice $A_n$,
and to be~$\mathbb{R}^{n}$ for the root lattices $B_n$, $C_n$, and $D_n$.
The root lattices can be defined to be generated as monoids
respectively by
\begin{align*}
M_{A_n}&=\bigl\{\pm(e_i-e_j)\,\big|\,1\le i<j\le n+1\bigr\},\\[5pt]
M_{B_n}&=\bigl\{\pm e_i\pm e_j\,\big|\,1\le i<j\le n\bigr\}
\cup\bigl\{\pm e_i\,\big|\,1\le i\le n\bigr\},\\[5pt]
M_{C_n}&=\bigl\{\pm e_i\pm e_j\,\big|\,1\le i<j\le n\bigr\}
\cup\bigl\{\pm 2e_i\,\big|\,1\le i\le n\bigr\},\\[5pt]
M_{D_n}&=\bigl\{\pm e_i\pm e_j\,\big|\,1\le i<j\le n\bigr\}.
\end{align*}
We denote the coordinator polynomial of type $T$ by $h_T(x)$. 
Conway and Sloane~\cite{CS97} established the explicit
expression
\begin{equation}\label{a}
h_{A_n} (x) = \sum_{k=0}^{n}{n\choose k}^2 x^k,
\end{equation}
which were also called the
Narayana polynomials of type $B$ by Chen, Tang, Wang and
Yang~\cite{CTWY10}. In fact, these polynomials appeared as the rank
generating function of the lattice of noncrossing partitions of type
$B$ on the set $\{1,2,\ldots,n\}$.
With Colin Mallows's help, Conway and Sloane~\cite{CS97} conjectured that
\begin{equation}\label{d}
h_{D_n}(x)=
\frac{(1+\sqrt{x})^{2n}+(1-\sqrt{x})^{2n}}{2}-2n x(1+x)^{n-2}.
\end{equation}
Baake and Grimm~\cite{BG97} pointed out that the methods
outlined in~\cite{CS97} can be used to deduce that the coordinator
polynomials of type $C$ have the expression
\begin{equation}\label{c}
h_{C_n}(x)=\sum_{k=0}^{n}\binom{2n}{2k} x^k.
\end{equation}
They also conjectured that
\begin{equation}\label{b}
h_{B_n}(x)=\sum_{k=0}^{n} \binom{2n+1}{2k}x^k - 2n x(1+x)^{n-1}.
\end{equation}
Bacher, de la Harpe and Venkov~\cite{BHV97} rederived~\eqref{a} and
proved the formulas~\eqref{d}, \eqref{c} and~\eqref{b}. Recently,
Ardila et al.~\cite{ABHPS11} gave  alternative proofs
for~\eqref{a}, \eqref{d} and \eqref{c} by computing the
$f$-vectors of a unimodular triangulation of the corresponding root
polytope.

The real-rootedness of coordinator polynomials has
received much attention. As pointed out by Conway and
Sloane~\cite{CS97}, coordinator polynomials of type~$A$ can be
expressed as
\[
h_{A_n}(x)=(1-x)^{n}L_n\biggl(\frac{1+x}{1-x}\biggr),
\]
where $L_n(x)$ denotes the $n$th Legendre polynomial. 
Since Legendre polynomials are orthogonal, and thus real-rooted, 
we are led to the following result.
\begin{thm}\label{thm-A}
The coordinator polynomials of type $A$ are real-rooted.
\end{thm}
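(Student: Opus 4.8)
The plan is to transport the classical real-rootedness of Legendre polynomials through the M\"obius substitution $y=(1+x)/(1-x)$ set up just above the statement. Recall that $L_n$, being the degree-$n$ member of the family orthogonal on $[-1,1]$ with respect to Lebesgue measure, has exactly $n$ distinct real zeros $r_1,\dots,r_n$, all lying in the open interval $(-1,1)$; this is the standard zero-separation property of orthogonal polynomial systems.

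First I would record that $y=(1+x)/(1-x)$ is a M\"obius transformation with inverse $x=(y-1)/(y+1)$, restricting to a bijection of $\mathbb{R}\setminus\{1\}$ onto $\mathbb{R}\setminus\{-1\}$. Next, from the explicit formula~\eqref{a} one reads off that $h_{A_n}$ has degree exactly $n$ and that $h_{A_n}(1)=\sum_{k=0}^n\binom{n}{k}^2=\binom{2n}{n}\neq0$, so $x=1$ is not a zero of $h_{A_n}$. Hence for any zero $x_0$ of $h_{A_n}$ we have $(1-x_0)^n\neq0$, and the identity $h_{A_n}(x)=(1-x)^nL_n\bigl((1+x)/(1-x)\bigr)$ forces $L_n\bigl((1+x_0)/(1-x_0)\bigr)=0$; thus $(1+x_0)/(1-x_0)=r_j$ for some $j$, so that $x_0=(r_j-1)/(r_j+1)$ is real --- indeed negative, since $r_j\in(-1,1)$.

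Finally I would confirm that the count is sharp: the numbers $(r_j-1)/(r_j+1)$ for $j=1,\dots,n$ are pairwise distinct by injectivity of the inverse M\"obius map, and each is a zero of $h_{A_n}$; since $\deg h_{A_n}=n$, they are precisely the zeros. Therefore all zeros of $h_{A_n}$ are real, proving Theorem~\ref{thm-A}.

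I do not expect a genuine obstacle, the argument being a clean change of variables; the only delicate point is ruling out that the prefactor $(1-x)^n$ in the identity conceals a spurious zero at $x=1$ or depresses the degree, and both are settled at once by inspecting the coefficients in~\eqref{a}. (Alternatively, one could bypass Legendre polynomials entirely and deduce the same conclusion from the Aissen--Schoenberg--Whitney theorem or from a three-term recurrence for $h_{A_n}(x)$, but the substitution above is the most economical route here.)
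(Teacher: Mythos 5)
Your argument is correct and is essentially the proof the paper itself intends: it transports the real zeros of the Legendre polynomial $L_n$ through the identity $h_{A_n}(x)=(1-x)^nL_n\bigl((1+x)/(1-x)\bigr)$, exactly as sketched in the paragraph preceding Theorem~\ref{thm-A}. Your additional checks (that $h_{A_n}(1)=\binom{2n}{n}\neq0$ and that $\deg h_{A_n}=n$) are worthwhile details the paper leaves implicit, but they do not change the route.
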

In fact, Theorem~\ref{thm-A} follows 
immediately from a classical result of Schur~\cite{Sch14},
see also Theorems 2.4.1 and 3.5.3 in Brenti~\cite{Bre89}. 
For $h_{C_n}(x)$, 
one may easily deduce the real-rootedness by
the Aissen-Schoenberg-Whitney theorem.

\begin{thm}\label{thm-C}
The coordinator polynomials of type $C$ are real-rooted.
\end{thm}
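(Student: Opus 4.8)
The plan is to argue directly from the closed form~\eqref{c}, namely $h_{C_n}(x)=\sum_{k=0}^{n}\binom{2n}{2k}x^k$, and to apply the Aissen-Schoenberg-Whitney theorem twice. By that theorem it suffices to show that the coefficient sequence $\bigl\{\binom{2n}{2k}\bigr\}_{k=0}^{n}$ is a P\'olya frequency sequence, that is, that the lower-triangular Toeplitz matrix $B=\bigl(\binom{2n}{2(i-j)}\bigr)_{i,j=0}^{n}$ is totally positive, where as usual $\binom{2n}{m}$ is read as $0$ for $m<0$.

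To obtain this, I would first note that $(1+x)^{2n}$ is real-rooted with nonnegative coefficients, so a first application of the theorem shows that the $(2n+1)\times(2n+1)$ Toeplitz matrix $A=\bigl(\binom{2n}{i-j}\bigr)_{i,j=0}^{2n}$ is totally positive. The key observation is that $B$ is precisely the submatrix of $A$ obtained by keeping the rows and columns whose index is even: the $(p,q)$ entry of that submatrix is $\binom{2n}{(2p)-(2q)}=\binom{2n}{2(p-q)}$, which is the $(p,q)$ entry of $B$. Since every square submatrix of $B$ is also a square submatrix of $A$ with the same determinant, total positivity of $A$ forces total positivity of $B$. Hence $\bigl\{\binom{2n}{2k}\bigr\}_{k=0}^{n}$ is a P\'olya frequency sequence, and a second application of the Aissen-Schoenberg-Whitney theorem gives that $h_{C_n}(x)$ is real-rooted (indeed all of its zeros turn out to be nonpositive).

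Because the argument is so short, there is no genuine obstacle; the only point demanding care is the indexing that identifies $B$ with the even-indexed submatrix of $A$, in particular keeping the out-of-range binomial coefficients consistently equal to zero so that the two Toeplitz patterns really coincide. For completeness I would also record an alternative route that anticipates the treatment of type~$D$ later in the paper: from~\eqref{c} one has the identity $h_{C_n}(y^2)=\frac12\bigl((1+y)^{2n}+(1-y)^{2n}\bigr)$, and the substitution $w=\frac{1+y}{1-y}$ turns the equation $(1+y)^{2n}+(1-y)^{2n}=0$ into $w^{2n}=-1$, whose $2n$ solutions correspond to the purely imaginary values $y=\pm i\tan\bigl(\frac{(2j+1)\pi}{4n}\bigr)$ for $j=0,1,\dots,n-1$; consequently the $n$ zeros of $h_{C_n}$ are the distinct nonpositive reals $-\tan^2\bigl(\frac{(2j+1)\pi}{4n}\bigr)$, which again proves the theorem.
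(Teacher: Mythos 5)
Your main argument is exactly the route the paper takes: it deduces Theorem~\ref{thm-C} from the Aissen-Schoenberg-Whitney theorem, and your bisection argument (the Toeplitz matrix of $\bigl\{\binom{2n}{2k}\bigr\}$ sits inside the totally positive Toeplitz matrix of $\bigl\{\binom{2n}{m}\bigr\}$ as the even-indexed rows and columns) correctly supplies the details the paper leaves implicit. Your alternative computation of the explicit roots $-\tan^2\bigl(\frac{(2j+1)\pi}{4n}\bigr)$ is also correct and is in the spirit of the trigonometric substitution the paper uses for type $D$, but it is not needed here.
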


Liang and Yang~\cite{LY} reproved
both Theorems~\ref{thm-A} and~\ref{thm-C} by establishing 
recurrences of the coefficients. Moreover,
they verified the real-rootedness of coordinator polynomials 
of types $E_6$, $E_7$, $F_4$ and $G_2$.
In contrast, $h_{E_8}(x)$ is not real-rooted. 
They also conjectured that $h_{D_n}(x)$ is real-rooted. 

For coordinator polynomials of type $B_n$, we find 
$h_{B_{16}}(x)$ has $14$ real roots and $2$ non-real roots.
So $h_{B_n}(x)$ are not real-rooted in general.
In the next section, 
we develop a trigonometric substitution approach
which enables us to confirm Liang-Yang's conjecture. 
In Section 3,
we establish the log-concavities of all coordinator polynomials
of Weyl group lattices.

\section{The real-rootedness}

In this section, we show the real-rootedness of $h_{D_n}(x)$.

\begin{thm}\label{thm-D}
The coordinator polynomials of type $D$ are real-rooted.
\end{thm}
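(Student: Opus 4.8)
The explicit formula \eqref{d} for $h_{D_n}(x)$ suggests the substitution that decouples the two competing terms. Writing $\sqrt{x}=\tan\theta$ (equivalently $x=\tan^2\theta$), we have $1+x=\sec^2\theta$, so
\[
(1+\sqrt x)^{2n}+(1-\sqrt x)^{2n}
=\sec^{2n}\theta\bigl((\cos\theta+\sin\theta)^{2n}+(\cos\theta-\sin\theta)^{2n}\bigr)
=2\sec^{2n}\theta\cos^{2n}\!\Bigl(\theta-\tfrac\pi4\Bigr)\!+\!2\sec^{2n}\theta\sin^{2n}\!\Bigl(\theta+\tfrac\pi4\Bigr),
\]
which after the identity $\cos(\theta-\pi/4)=\sin(\theta+\pi/4)$ collapses to $4\sec^{2n}\theta\cos^{2n}(\theta-\pi/4)$. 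Hence
\[
h_{D_n}(\tan^2\theta)=\sec^{2n}\theta\Bigl(2\cos^{2n}\!\bigl(\theta-\tfrac\pi4\bigr)-2n\tan^2\theta\,\cos^4\theta\Bigr)
= 2\sec^{2n}\theta\Bigl(\cos^{2n}\!\bigl(\theta-\tfrac\pi4\bigr)-n\sin^2\theta\cos^{2}\theta\Bigr).
\]
Since $\sec^{2n}\theta>0$, the real roots of $h_{D_n}(x)$ with $x>0$ correspond exactly to the zeros in $\theta$ of the bracketed trigonometric expression, and one then has to locate enough of them; the roots with $x<0$ should be handled by the companion substitution $\sqrt{-x}=\tan\theta$, i.e. $x=-\tan^2\theta$, which makes $1-x=\sec^2\theta$ and turns $(1+x)^{n-2}$ and the first term into real trigonometric quantities as well.

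First I would carry out the substitution carefully in both regimes $x\ge 0$ and $x\le 0$, reducing the claim to: the function $g_n(\theta):=\cos^{2n}(\theta-\pi/4)-n\sin^2\theta\cos^2\theta$ (and its $x<0$ analogue) has enough sign changes on the relevant $\theta$-interval. Next, using $\sin^2\theta\cos^2\theta=\tfrac14\sin^2 2\theta$ and a shift $\phi=\theta-\pi/4$ so that $\sin 2\theta=\cos 2\phi$, I would rewrite $g_n$ as $\cos^{2n}\phi-\tfrac n4\cos^2 2\phi$, a cleaner even function of $\phi$. Then I would count its real zeros on a fundamental domain by an intermediate-value argument: evaluate at the grid points $\phi_k$ where $\cos 2\phi$ vanishes or is extremal, compare $\cos^{2n}\phi$ against $\tfrac n4\cos^2 2\phi$ at each, and show the two curves cross the requisite number of times. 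Because $h_{D_n}$ has degree $n$ (for $n\ge 2$), producing $n$ real roots counted with multiplicity — matching the count via the double cover $\theta\mapsto x$ — finishes the argument; one must keep track of the factor-of-two between $\theta$ and $x=\pm\tan^2\theta$ and of the behaviour at $\theta=0$ and $\theta\to\pi/2$ (i.e. $x=0$ and $x\to\pm\infty$).

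The main obstacle will be the zero-counting step: turning the qualitative picture "$\cos^{2n}\phi$ is a tall narrow bump, $\tfrac n4\cos^2 2\phi$ is a fixed-shape curve of height $n/4$" into a rigorous count valid for all $n$. The difficulty is uniformity in $n$: as $n$ grows, $\cos^{2n}\phi$ concentrates near $\phi=0$, so the crossings migrate, and naive evaluation at fixed sample points may not separate the curves. I expect the fix is to choose $n$-dependent test points — for instance $\phi$ of order $1/\sqrt n$ near the central bump, together with the fixed points near $\phi=\pm\pi/4$ where $\cos 2\phi=0$ forces $g_n<0$, and the points $\phi=\pm\pi/2$ where $g_n=-\tfrac n4$ vs. the endpoint values — and to verify the sign pattern by elementary estimates such as $\cos^{2n}\phi\le e^{-n\phi^2}$ and $\cos^{2n}\phi\ge 1-n\phi^2$. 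A secondary nuisance is low-degree edge cases ($n=1,2,3$), which I would simply check by hand, and the bookkeeping that a real root $x_0<0$ of $h_{D_n}$ pulls back to a genuine real $\theta$ only when $x_0\le 0$, so the two substitutions together must be shown to account for all $n$ roots without overlap or omission.
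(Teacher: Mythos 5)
Your core idea---a trigonometric substitution turning $h_{D_n}$ into a function whose sign changes can be counted---is indeed the right one, and it is the approach the paper takes. But your execution goes off track in two places, and the plan as written cannot produce $n$ real roots. First, the regime $x>0$ is vacuous: $h_{D_n}$ has nonnegative coefficients, so every real root is negative, and the whole proof must live in the substitution $x=-\tan^2(\phi/2)$ that you relegate to a parenthetical and never carry out. Second, the algebra in your $x>0$ computation is wrong: $(\cos\theta\pm\sin\theta)^{2n}=2^{n}\cos^{2n}\bigl(\theta\mp\tfrac{\pi}{4}\bigr)$, so the two terms carry a factor $2^{n}$ (not $2$) and do not collapse into a single $\cos^{2n}(\theta-\pi/4)$; the sum $\cos^{2n}(\theta-\pi/4)+\cos^{2n}(\theta+\pi/4)$ genuinely has two bumps.

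The decisive problem, though, is structural. A function of the shape $\cos^{2n}\phi-\tfrac{n}{4}\cos^2 2\phi$ is a nonnegative bump minus a fixed-shape curve; it has only a bounded number of sign changes, so no choice of $n$-dependent test points will extract $n$ roots from it---your own worry about ``uniformity in $n$'' is the symptom of this. The correct substitution is $x=-\tan^2(\phi/2)$ with $\phi\in(0,\pi)$, under which $\bigl(1\pm\sqrt{x}\bigr)^{2n}=\bigl(1\pm i\tan(\phi/2)\bigr)^{2n}=\sec^{2n}(\phi/2)\,e^{\pm in\phi}$, so the first term of \eqref{d} becomes $\sec^{2n}(\phi/2)\cos n\phi$: an oscillating function with $n$ sign alternations on $(0,\pi)$ for free. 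After dividing out $\sec^{2n}(\phi/2)$, the second term becomes the perturbation $\tfrac{n}{2}\sin^2\phi\,\cos^{n-2}\phi$, which is bounded in absolute value by $1$ (by the arithmetic--geometric mean inequality). Evaluating at $\phi=j\pi/n$ then gives alternating signs, and the intermediate value theorem yields $n$ distinct roots $\phi_j$, hence $n$ distinct negative roots $x_j=-\tan^2(\phi_j/2)$. Without the complex-exponential step that produces $\cos n\phi$, the oscillation---and hence the root count---is simply not there.
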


We shall adopt a technique of trigonometric transformation.
To be precise, we transform the polynomial $h_{D_n}(x)$ into a
trigonometric function, say, $g_n(\theta)$,
and then consider the roots of $g_n(\theta)$. 
It turns out that the signs
of $g_n(\theta)$ at a sequence of $n+1$ fixed values of $\theta$ 
are interlacing. Hence $g_n(\theta)$ 
has $n$ distinct zeros in a certain domain, and so does $h_{D_n}(x)$. 

\begin{proof}
We are going to show that the polynomial $h_{D_n}(x)$
has $n$ distinct negative roots for $n\ge2$.
For this purpose, we let $y>0$ and substitute $x=-y^2$ in the expression~\eqref{d} of $h_{D_n}(x)$.
Note that $\sqrt{-y^2}$ is two-valued, denoting $\pm yi$. 
However, taking $\sqrt{-y^2}=yi$ and taking $\sqrt{-y^2}=-yi$ 
yields the same expression of $h_{D_n}(-y^2)$, that is,
\begin{equation}\label{eq-f}
h_{D_n}(-y^2)={(1+yi)^{2n}+(1-yi)^{2n}\over2}+2ny^2(1-y^2)^{n-2}.
\end{equation}

Without loss of generality, we can suppose that 
\[
y=\tan{\phi\over2},
\]
where $\phi\in(0,\pi)$. Then $1+yi=\sqrt{1+y^2}\,e^{i\phi/2}$,
and thus
\[
(1+yi)^{2n}+(1-yi)^{2n}=(1+y^2)^ne^{in\phi}+(1+y^2)^ne^{-in\phi}=2(1+y^2)^n\cos{n\phi}.
\]
It follows that
\[
h_{D_n}(-y^2)=(1+y^2)^n\cos{n\phi}+2ny^2(1-y^2)^{n-2}
=\bigl(1+y^2\bigr)^ng_n(\phi),
\]
where
\begin{equation}\label{g}
g_n(\phi)=\cos{n\phi}+{n\over2}\sin^2\phi\cos^{n-2}\phi.
\end{equation}

Now it suffices to prove that the function $g_n(\phi)$ 
has $n$ distinct roots $\phi$ in the interval $(0,\pi)$. 
Let
\[
h_n(\phi)={n\over2}\sin^2\phi\cos^{n-2}\phi.
\]
We claim that 
\begin{equation}\label{h}
|h_n(\phi)|<1.
\end{equation}
In fact, by the arithmetic-geometric mean inequality, 
\begin{align}
h_n^2(\phi) & =
\Bigl({n\over2}\sin^2\phi\Bigr)\Bigl({n\over2}\sin^2\phi\Bigr)
\bigl(\cos^2\phi\bigr)^{n-2}\notag \\
& \le
\Bigl({{n\over2}\sin^2\phi+{n\over2}\sin^2\phi+(n-2)\cos^2\phi
\Bigr)}^n\Big/n^n\label{eq-11}\\[5pt]
&=\Bigl(1-{2\cos^2\phi\over n}\Bigr)^n\le1.\label{eq-12}
\end{align}
Note that the equality in~\eqref{eq-11} holds if and only if
\begin{equation}\label{cond1}
{n\over2}\sin^2\phi=\cos^2\phi,
\end{equation}
while the equality in~\eqref{eq-12}
holds if and only if 
\begin{equation}\label{cond2}
\cos\phi=0. 
\end{equation}
However, the conditions~\eqref{cond1} and~\eqref{cond2}
contradict each other. So the equality in~\eqref{eq-12}
does not hold. This confirms the claim~\eqref{h}.

Let
$j$ be an integer. From~\eqref{g}, we see that
\[
g_n\Bigl({j\pi\over n}\Bigr)=(-1)^j+h_n\Bigl({j\pi\over n}\Bigr).
\]
Since $|h(\phi)|<1$ for any $\phi$, we have
\[
(-1)^jg_n\Bigl({j\pi\over n}\Bigr)
=1+(-1)^jh_n\Bigl({j\pi\over n}\Bigr)>0.
\]
By its continuity, we obtain that $g_n(\phi)$ has roots
$\phi_0,\phi_1,\ldots,\phi_{n-1}$ such that
\[
0<\phi_0<{\pi\over n}<\phi_1<{2\pi\over n}<\phi_2<{3\pi\over n}
<\cdots<{(n-1)\pi\over n}<\phi_{n-1}<\pi.
\]
In conclusion, the polynomial $h_{D_n}(x)$ has $n$ distinct negative roots
\[
x_j=-\tan^2{\phi_j\over2},\qquad j=0,1,\ldots,n-1.
\]
This completes the proof.
\end{proof}

\section{The log-concavity}

In this section,
we consider the log-concavities
of coordinator polynomials of Weyl group lattices.
For basic notions on the Weyl group, 
see Humphreys~\cite{Hum72B}.
By the definition~(\ref{def-h}), it is easy to see that
the coordinator polynomial of any Weyl group lattice
is the product of the coordinator polynomials of 
the Weyl group lattices determined by the irreducible components.
This has been noticed by, for instance, Conway and Sloane~\cite[Page 2373]{CS97}.
By the Cauchy-Binet theorem,
the product of log-concave polynomials with nonnegative coefficients and no
internal zero coefficients are log-concave;
see Stanley~\cite[Proposition 2]{Sta89}.
Therefore, we are led to consider the coordinator polynomials of 
the Weyl group lattices which are determined by irreducible root systems.
By the Cartan-Killing classification, 
irreducible root systems can be classified into types $A_n$, $B_n$, $C_n$, $D_n$, 
$E_6$, $E_7$, $E_8$, $F_4$ and $G_2$.
For historical notes, see Bourbaki~\cite{Bou02}.
To conclude, we have the following result.

\begin{thm}
All coordinator polynomials of Weyl group lattices are log-concave. 
\end{thm}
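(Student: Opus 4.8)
The plan is to reduce the statement to the real-rootedness or log-concavity of the coordinator polynomials of each irreducible type. First I would invoke the multiplicativity observed by Conway and Sloane: since the coordinator polynomial of a Weyl group lattice factors as the product of the coordinator polynomials of the irreducible components, and since a product of log-concave polynomials with nonnegative coefficients and no internal zeros is again log-concave (Stanley's Proposition 2, via Cauchy--Binet), it suffices to treat the irreducible types $A_n$, $B_n$, $C_n$, $D_n$, $E_6$, $E_7$, $E_8$, $F_4$, $G_2$ one at a time. One must of course check that each irreducible coordinator polynomial has nonnegative coefficients with no internal zeros; for the infinite families this is clear from the explicit formulas~\eqref{a}--\eqref{b} together with the real-rootedness proved above, and for the five exceptional types it is a finite check.

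Next I would dispatch the types for which real-rootedness is already available. By Theorems~\ref{thm-A}, \ref{thm-C}, and~\ref{thm-D}, the polynomials $h_{A_n}(x)$, $h_{C_n}(x)$, and $h_{D_n}(x)$ are real-rooted, hence log-concave (with no internal zeros, since a real-rooted polynomial with nonnegative coefficients has no internal zero coefficients). The exceptional types $E_6$, $E_7$, $F_4$, $G_2$ are each a single polynomial, shown real-rooted by Liang and Yang~\cite{LY}, hence log-concave; and $E_8$, although not real-rooted, is a single explicitly known polynomial whose coefficient sequence can be checked directly to be log-concave by hand.

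The remaining and genuinely delicate case is type $B_n$, where Theorem~\ref{thm-A}'s route is unavailable because $h_{B_n}(x)$ is not real-rooted in general (as witnessed by $h_{B_{16}}$). Here I would work directly with the explicit formula~\eqref{b}, writing $h_{B_n}(x)=\sum_{k}\bigl[\binom{2n+1}{2k}-2n\binom{n-1}{k-1}\bigr]x^k$, and try to prove the defining inequalities $a_k^2\ge a_{k-1}a_{k+1}$ for this coefficient sequence. A clean strategy is to exploit the trigonometric substitution of Section~2 in the same spirit: substituting $x=-\tan^2(\phi/2)$ converts~\eqref{b} into something like $(1+y^2)^n$ times a trigonometric expression, and the distribution of sign changes of that expression controls how many roots are real versus complex and, more usefully, pins down the location of the conjugate pair; knowing that all but one pair of roots are real and negative, and bounding the real part and modulus of the offending pair, lets one verify log-concavity through a Newton-type argument. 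The main obstacle will be controlling that single non-real conjugate pair of $h_{B_n}$ precisely enough, uniformly in $n$, to push the log-concavity inequalities through the two or three indices $k$ near where the non-real pair would otherwise destroy interlacing; an alternative, if the uniform estimate proves stubborn, is a direct but careful manipulation of the binomial identity for $a_k^2-a_{k-1}a_{k+1}$, splitting off the $\binom{2n+1}{2k}$ part (whose log-concavity is classical) and estimating the cross terms involving $2n\binom{n-1}{k-1}$.
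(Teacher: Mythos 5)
Your overall architecture matches the paper's: reduce to irreducible types via the multiplicativity of coordinator polynomials and Stanley's product lemma, handle $A_n$, $C_n$, $D_n$ by real-rootedness, and check the five exceptional types directly. The problem is the type $B_n$ case, which is the only genuinely nontrivial step, and there you offer two candidate strategies rather than a proof. The first (locate the single non-real conjugate pair of $h_{B_n}$ via the trigonometric substitution and push log-concavity through with a Newton-type argument) is speculative: you yourself flag that controlling the pair uniformly in $n$ is the main obstacle, and you do not resolve it. The second (direct estimation of $a_k^2-a_{k-1}a_{k+1}$ by splitting off the $\binom{2n+1}{2k}$ part) is also only named, not executed; the cross terms there are not obviously tractable. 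As written, the $B_n$ case is a research plan, not an argument, so the proof is incomplete.

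The paper's actual route for $B_n$ is a normalization trick that sidesteps both of your strategies. Writing $b_k$ for the coefficient of $x^k$ in $h_{B_n}(x)$, one checks from \eqref{b} that $b_k=\binom{n}{k}\,b_k'$ with
\[
b_k'=\frac{(2n+1)!!}{(2k-1)!!\,(2n-2k+1)!!}-2k ,
\]
using the identities $\binom{2n+1}{2k}=\binom{n}{k}\frac{(2n+1)!!}{(2k-1)!!(2n-2k+1)!!}$ and $2n\binom{n-1}{k-1}=2k\binom{n}{k}$. Since a termwise product of nonnegative log-concave sequences is log-concave and $\bigl\{\binom{n}{k}\bigr\}$ is log-concave, it suffices to verify the log-concavity of the much simpler sequence $\bigl\{b_k'\bigr\}$, which is an elementary computation. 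If you want to salvage your write-up, replacing your $B_n$ discussion with this factorization is the cleanest fix; your instinct to avoid real-rootedness for $B_n$ was correct, but the root-location machinery is unnecessary here.
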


\begin{proof}
It is straightforward to verify the log-concavity of the coordinator polynomials 
of types $E_6$, $E_7$, $E_8$, $F_4$ and $G_2$.
By Theorems~\ref{thm-A}, \ref{thm-C} and~\ref{thm-D}, 
it suffices to show the log-concavity of $h_{B_n}(x)$.
Let $b_k$ be the coefficient of $x^k$ in $h_{B_n}(x)$,
and let $b_k'=b_k\big/{n\choose k}$.
By~(\ref{b}), we have
\[
b_k'={(2n+1)!!\over (2k-1)!!(2n-2k+1)!!}-2k.
\]
It is easy to verify that the sequence $\bigl\{b_k'\bigr\}_{k=0}^n$ is log-concave,
which implies the log-concavity of $\bigl\{b_k\bigr\}_{k=0}^n$.
This completes the proof.
\end{proof}

\noindent{\bf Acknowledgments.}
This work was supported by
the National Natural Science Foundation of China
(Grant No.~$11101010$).
We are grateful to Arthur Yang 
for his kindly telling us the real-rootedness conjecture,
and to Chunwei Song for his encouragements.
We also thank the anonymous referee for detailed comments
that improved the organization of this material.

\end{document}